% This is samplepaper.tex, a sample chapter demonstrating the
% LLNCS macro package for Springer Computer Science proceedings;
% Version 2.20 of 2017/10/04
%
\documentclass[runningheads]{llncs}
\usepackage{graphicx}
% Used for displaying a sample figure. If possible, figure files should
% be included in EPS format.
%
% If you use the hyperref package, please uncomment the following line
% to display URLs in blue roman font according to Springer's eBook style:
% \renewcommand\UrlFont{\color{blue}\rmfamily}
\usepackage{amsmath,amsfonts}
\usepackage{multirow}
\usepackage{authblk}
\usepackage{cite}
\usepackage{url}
\usepackage{pgf}
\usepackage{color}
\usepackage{todonotes}
\usepackage{booktabs} %for table weighted rules
\usepackage{algorithm}
\usepackage[noend]{algorithmic}
\usepackage{pgfplots}
\usepackage{caption}
\usepackage{soul}
\usepackage{bbm}
\usepackage{enumerate}

\begin{document}
\title{Bilevel Optimization for \\ On-Demand Multimodal Transit Systems}%\thanks{Supported by organization x.}
%
%\titlerunning{Abbreviated paper title}
% If the paper title is too long for the running head, you can set
% an abbreviated paper title here
%
\author{Beste Basciftci \and %\inst{1} \orcidID{0000-1111-2222-3333}
Pascal Van Hentenryck}
\authorrunning{Basciftci and Van Hentenryck}
% First names are abbreviated in the running head.
% If there are more than two authors, 'et al.' is used.
%
\institute{Georgia Institute of Technology, Atlanta, GA 30332 \\
\email{beste.basciftci@gatech.edu, pascal.vanhentenryck@isye.gatech.edu}}
\maketitle              % typeset the header of the contribution
\begin{abstract}
  This study explores the design of an On-Demand Multimodal Transit
  System (ODMTS) that includes segmented mode switching models that decide
  whether potential riders adopt the new ODMTS or stay
  with their personal vehicles. It is motivated by the desire of transit
  agencies to design their network by taking into account both
  existing and latent demand, as quality of service improves. The
  paper presents a bilevel optimization where the leader problem
  designs the network and each rider has a follower problem to decide
  her best route through the ODMTS. The bilevel model is solved by a
  decomposition algorithm that combines traditional Benders cuts with
  combinatorial cuts to ensure the consistency of mode choices by the
  leader and follower problems. The approach is evaluated on a case
  study using historical data from Ann Arbor, Michigan, and a user
  choice model based on the income levels of the potential transit
  riders.  \keywords{On-Demand Transit System, Mode Choice, Bilevel
    optimization, Benders Decompositon, Combinatorial Cuts}
\end{abstract}
\section{Introduction}

On-Demand Multimodal Transit Systems (ODMTS) \cite{Arthur2019,ISE2019}
combines on-demand shuttles with a bus or rail network. The on-demand
shuttles serve local demand and act as feeders to and from the
bus/rail network, while the bus/rail network provides high-frequency
transportation between hubs. By using on-demand shuttles to pick up
riders at their origins and drop them off at their destinations, ODMTS
addresses the first/last mile problem that plagues most of the transit
systems. Moreover, ODMTS addresses congestion and economy of scale by
providing high-frequency along congested corridors. They have been shown
to bring substantial convenience and cost benefits in simulation and
pilot studies in the city of Canberra, Australia and the city of Ann
Arbor, Michigan. 

The design of an ODMTS is a variant of the hub-arc location problem
\cite{Campbell2005a, Campbell2005b}: It uses an optimization model
that decides which bus/rail lines to open in order to maximize
convenience (e.g., minimize transit time) and minimize costs
\cite{Arthur2019}. This optimization model uses, as input, the current
demand, i.e., the set of origin-destination pairs over time in the
existing transit system. Transit agencies however are worried about
latent demand: As the convenience of the transit system improves, more
riders may decide to switch modes and adopt the ODMTS instead of
traveling with their personal vehicles. By ignoring the latent demand,
the ODMTS may be designed suboptimally, providing a lower convenience
or higher costs. This concern was raised in \cite{Campbell2019} who
articulated the potential of leveraging data analytics within the
planning process and proposing transit systems that encourage riders
to switch transportation modes.

This paper aims at remedying this limitation and explores the design
of ODMTS with both existing and latent demands. It considers a pool of
potential riders, each of whom is associated with a personalized mode
choice model that decides whether a rider will switch mode for a given
ODMTS. Such a choice model can be obtained through stated and revealed
preferences, using surveys and/or machine learning \cite{Zhao2019}.
The main innovation of this paper is to show how to integrate such
mode choice models into the design of ODMTS, capturing the latent
demand and human behavior inside the optimization model. More precisely,
the contributions of the paper can be summarized as follows:

\begin{enumerate}
\item The paper proposes a novel bilevel optimization approach to model the ODMTS problem with latent demand in order to obtain the most cost-efficient and convenient route for each trip.
\item The bilevel optimization model includes a personalized mode choice for each rider to determine mode switching or latent demand.
\item The bilevel optimization model is solved through a decomposition algorithm that combines both traditional and combinatorial Benders cuts. 
\item The paper demonstrates the benefits and practicability of the approach on a case study using historical data over Ann Arbor, Michigan.
\end{enumerate}
%%In this study, we consider  in which the behavior of the riders are integrated into the design of the transit system. We specifically examine an on-demand multimodal transit system (ODMTS), which is essentially a hub and on-demand shuttle public transit system \cite{Arthur2019}. ODMTS involves hubs connected by high frequency buses and on-demand shuttles that arrive to and depart from the hubs to serve the first and last mile transportation of the users. Each user is associated with a trip, which entails a specific origin and destination pair. Given the locations of the hubs in the high density areas, our goal is to design the transit network between the hubs, and serve the trips of each user with the help of buses and on-demand shuttles. In particular, we focus on minimizing the total investment and operational costs of the transit network design while finding the most cost-efficient and convenient route for each user and determining their choices within a game theoretic framework. %We study an on-demand multimodal transit network system by considering user behavior while designing the underlying network. 
%Our contributions can be summarized as follows:

The remainder of the paper is organized as follows. Section
\ref{Sectionliterature} reviews the relevant literature. Section
\ref{section:problem} specifies the ODMTS design problem. Section
\ref{SectionModel} proposes a bilevel optimization approach for the
design of ODMTS with latent demand, and Section
\ref{SectionSolutionAlg} develops the novel decomposition
methodology. The case study is presented in in Section
\ref{SectionComputations} and Section \ref{SectionConclusion}
concludes the paper with final remarks.

\section{Related Literature}
\label{Sectionliterature}

Hub location problems are an important area of research in transit
network design (see \cite{Farahani2013} for a recent review). More
specifically, the transit network design problem considering hubs can
be considered as a variant of the hub-arc location problem
\cite{Campbell2005a, Campbell2005b}, which focuses on determining the
set of arcs to open between hubs, and optimizing the flow with minimum
cost. Mah\'eo, Kilby, and Van Hentenryck \cite{Arthur2019} extended this
problem to the ODMTS setting by introducing on-demand shuttles and
removing the restriction that each route needs to contain an arc
involving a hub. Furthermore, instead of the restriction of hubs being
interconnected in the network design, they consider a weak
connectivity within system by ensuring the sum of incoming and
outgoing arcs to be equal to each other for each hub. Although these
studies provide efficient solutions for a given demand, they neglect
the effect of the latent demand which can change the design of the
transit systems.

Bilevel optimization is an important area of mathematical programming,
which mainly considers a leader problem, and a follower problem that
optimizes its decisions under the leader problem's output. Due to this hierarchical decision making structure,
this area attracted attention in different urban transit network
design applications \cite{Leblanc1986, Farahani2013_Review}  such as discrete network design problems \cite{Fontaine2014} by improving a network via adding lines or increasing their capacities, and bus lane design problems \cite{Yu2015} under traffic equilibrium. Studies \cite{Colson2005, Sinha2018} provide an overview of various solution methodologies to address these problems including reformulations based on Karush-Kuhn-Tucker (KKT) conditions, descent methods and heuristics. User preferences and the corresponding latent demand constitute important factors impacting the network design. Because of the computational complexity involved with solving bilevel problems, it is preferred to model rider preferences within a single level optimization problem \cite{Laporte2011}. To this end, our approach provides a novel bilevel optimization framework for solving the ODMTS by integrating user choices, and developing an exact decomposition algorithm as its solution procedure. 

\section{Problem Statement}
\label{section:problem}

This section defines the problem statement and stays as close as
possible to the original setting of the ODMTS design
\cite{Arthur2019}. In particular, the input consists of a set of
(potentially virtual) bus stops $N$, a set of potential hubs $H
\subseteq N$, and a set of trips $T$. Each trip $r \in T$ is
associated with an origin stop $or^r \in N$, a destination stop $de^r
\in N$, and a number of passengers taking that trip $p^r \in
\mathbb{Z}_{+}$. This paper often abuses terminology and uses trips
and riders interchangeably, although a trip may contain several
riders. The distance and time between each node pair $i, j \in N$ is
given by parameters $d_{ij}$ and $t_{ij}$, respectively. These
parameters can be asymmetric and are assumed to satisfy the triangular
inequality. The network design optimizes a convex combination of
convenience (mostly travel time) and cost, using parameter $\theta \in
[0,1]$: In other words, convenience is multiplied by $\theta$ and cost
by $1 - \theta$. The investment cost of opening a leg between the hubs
$h, l \in H$ is given by as $\beta_{hl} = (1 - \theta) \ b \ n
\ d_{hl}$, where $b$ is the cost of using a bus per mile and $n$ is
the number of buses during the planning period. For each trip $r \in
T$, the weighted cost and convenience of using a bus between the hubs
$h, l \in H$ is given by $\tau^r_{hl} = \theta (t_{hl} + S)$, where
$S$ is the average waiting time of a bus (the bus cost is covered by
the investment).

This paper adopts a pricing model where the ODMTS subsidizes part, but
not all, of the shuttle costs. More precisely, for simplicity in the
notations, the paper assumes that the transit price is half of the
shuttle cost of a trip.\footnote{The results in this paper generalize
  to other subsidies and pricing models, and they will be discussed in
  the extended version of the paper.} With this pricing model, the
weighted cost and convenience for an on-demand shuttle between $i$ and
$j$ for the ODMTS and riders is given by $(1 - \theta) \ \frac{g}{2}
\ d_{ij} + \theta t_{ij}$, where $g$ is the cost of using a shuttle
per mile. Moreover, the shuttles act as feeders to bus system or
serve the local demand. As a result, their operations are restricted
to serve requests in a certain distance. This is captured by a
threshold value of $\Delta$ miles that characterizes the trips that
shuttles can serve. As a result, it is suitable to define the
weighted cost and convenience of an on-demand shuttle between the
stops $i,j \in N$ as follows:
\begin{equation*}
\gamma^r_{ij} := \begin{cases}
(1 - \theta) \frac{g}{2} d_{ij} + \theta t_{ij} & \text{if } d_{ij} \leq \Delta \\
M & \text{if } d_{ij} > \Delta. \\
%(1 - \theta) g d_{ij} + \theta t_{ij} & \text{if } i \neq or^r, j \neq de^r,
\end{cases}
\end{equation*}
where $M$ is a big-M parameter. 
 
To capture latent ridership, this paper assumes that a subset of trips
$T' \subseteq T$ currently travel with their personal vehicles, while the
trips in $T \setminus T'$ already use the transit system. The goal of
the paper is to capture, in the design of the ODMTS, the fact that
some riders may switch mode and use the ODMTS instead of their own
cars as the transit system has a better cost/convenience
trade-off. Each rider $r \in T'$ has a choice model ${\cal C}^r$ that
  determines, given the cost/convenience of the novel ODMTS, whether
  $r$ will switch to the transit system. For instance, the cost model
  could be
\[
{\cal C}^r(d^r) \equiv \mathbbm{1}(d^r \leq \alpha^r \ d^r_{car})
\]
where $d^r_{car}$ represents the weighted cost and convenience of
using a car for rider $r$, $d^r$ represents the weighted cost and
convenience of using the ODMTS in some configuration, and $\alpha^r
\in \mathbb{R}_{+}$. In other words, rider $r$ would switch to transit
if its convenience and cost is not more than $\alpha^r$ times the cost
and convenience of traveling with her personal vehicle. The choice
model could of course be more complex and include the number of
transfers and other features. It can be learned using multimodal logic
models or machine learning \cite{Zhao2019}.

%\vspace{-4mm}
\section{Model Formulation}
\label{SectionModel}

This section proposes an optimization model for the design of an ODMTS
following the specification from Section \ref{section:problem}. In the
model, binary variable $z_{hl}$ is 1 if there is a bus connection from
hub $h$ to $l$. Furthermore, for each trip $r$, binary variables
$x_{hl}^r$ and $y_{ij}^r$ represent whether rider $r$ uses a bus leg
between hubs $h,l \in H$ and a shuttle leg between stops $i$ and $j$
respectively. Binary variable $\delta^r$ for $r \in T'$ is 1 if rider
$r$ switches to the ODMTS. The bilevel optimization model for the
ODMTS design can then be specified as follows:

%\begin{subequations} \label{eq:upperLevelProblemUpdated2}
%\begin{alignat}{1}
%\min \quad & \sum_{h,l \in H} \beta_{hl} z_{hl} + \sum_{r \in T} p^r \delta^r d^r \label{eq:upperLevelObj}\\%\sum_{i,j \in N} \beta_{ij} z_{ij} + \sum_{r \in T} \sum_{i,j \in N} \tau_{i,j}^r (uc^r) p^r x_{ij}^r - \sum_{r \in T} uc^r p^r (ridershipfee)^r \\
%\text{s.t.} \quad & \sum_{l \in H} z_{hl} = \sum_{l \in H} z_{lh} \quad \forall h \in H \label{eq:upperLevelConstr1} \\ %& d^r = \theta^r (z) \quad \forall r \in T \\
%%& \sum_{h,l \in H} \beta_{hl} z_{hl} \leq B \label{eq:upperLevelConstr2} \\
%& \delta^r = \mathbbm{1}{(c^r (x^r, y^r) \leq \alpha^r c^r_{car})} \quad \forall r \in T \label{eq:userChoiceModel} \\
%& z_{hl} \in \{0,1\} \quad \forall h,l \in H \\
%& \delta^r \in \{0,1\} \quad \forall r \in T
%\end{alignat}
%\end{subequations}
\allowdisplaybreaks
\begin{subequations} \label{eq:upperLevelProblemUpdated2}
\begin{alignat}{1}
\min \quad & \sum_{h,l \in H} \beta_{hl} z_{hl} + \sum_{r \in T \setminus T'} p^r d^r + \sum_{r \in T'} p^r \delta^r d^r \label{eq:upperLevelObj} \\
%\min \quad & \sum_{h,l \in H} \beta_{hl} z_{hl} + \sum_{r \in T \setminus T'} p^r d^r + \sum_{r \in T'} p^r \delta^r d^r \label{eq:upperLevelObj}\\%\sum_{i,j \in N} \beta_{ij} z_{ij} + \sum_{r \in T} \sum_{i,j \in N} \tau_{i,j}^r (uc^r) p^r x_{ij}^r - \sum_{r \in T} uc^r p^r (ridershipfee)^r \\
\text{s.t.} \quad & \sum_{l \in H} z_{hl} = \sum_{l \in H} z_{lh} \quad \forall h \in H \label{eq:upperLevelConstr1} \\ %& d^r = \theta^r (z) \quad \forall r \in T \\
%& \sum_{h,l \in H} \beta_{hl} z_{hl} \leq B \label{eq:upperLevelConstr2} \\
& \delta^r = {\cal C}^r(d^r) \quad \forall r \in T' \label{eq:userChoiceModel} \\
& z_{hl} \in \{0,1\} \quad \forall h,l \in H  \label{eq:binaryConstraint} \\
& \delta^r \in \{0,1\} \quad \forall r \in T' \label{eq:continuousConstraint} 
\end{alignat}
\end{subequations}
where $d^r$ is the cost and convenience of trip $r$, i.e., 
\begin{subequations}  \label{eq:lowerLevelProblem}
\begin{alignat}{1}
d^r = \min \quad & \sum_{h,l \in H}  \tau_{hl}^r x_{hl}^r + \sum_{i,j \in N} \gamma_{ij}^r y_{ij}^r  \label{eq:lowerLevelObj} \\
\text{s.t.} \quad & \sum_{\substack{h \in H \\ \text{if } i \in H}} (x_{ih}^r - x_{hi}^r) + \sum_{i,j \in N}  (y_{ij}^r - y_{ji}^r) = \begin{cases}
1 &, \text{if  } i = or^r \\
-1 &, \text{if  } i = de^r \\
0 &, \text{otherwise}
\end{cases}
\quad \forall i \in N \label{eq:minFlowConstraint} \\
& x_{hl}^r \leq z_{hl} \quad \forall h,l \in H \label{eq:openFacilityOnlyAvailable} \\
& x_{hl}^r \in \{0,1\} \quad \forall h,l \in H, y_{ij}^r \in \{0,1\} \quad \forall i,j \in N. \label{eq:integralityFlowConstr}
\end{alignat}
\end{subequations}

The resulting formulation is a bilevel optimization where the leader
problem (equations \eqref{eq:upperLevelObj}--
\eqref{eq:continuousConstraint}) selects the network design and the
follower problem (equations
\eqref{eq:lowerLevelObj}--\eqref{eq:integralityFlowConstr}) computes
the weighted cost and convenience for each rider $r \in T$ in the
proposed ODMTS.

The objective of the leader problem \eqref{eq:upperLevelObj} minimizes
the investment cost of opening legs between hubs and the weighted cost
and convenience of the routes in the ODMTS for those
riders. Constraint \eqref{eq:upperLevelConstr1} ensures weak
connectivity between the hubs and constraint
\eqref{eq:userChoiceModel} represents the rider choice, i.e., whether
rider $r \in T'$ switches to the ODMTS.

The follower problem of a given trip minimizes the cost and
convenience of its route between its origin and destination, under a
given transit network design between the hubs (objective function
\eqref{eq:lowerLevelObj}). Constraint \eqref{eq:minFlowConstraint}
ensures flow conservation for the bus and shuttle legs.  Constraint
\eqref{eq:openFacilityOnlyAvailable} guarantees that only open legs
are considered by each trip. The follower problem has a totally
unimodular constraint matrix, once the leader problem determines the
transit network design decisions $z$. In this case, integrality
restrictions \eqref{eq:integralityFlowConstr} can be relaxed, and the
problem can be solved as a linear program.

As specified, the follower problem takes into account all of the arcs
between each node pair $i,j \in N$ for possible rides with on-demand
shuttles. However, due to the triangular inequality, it is sufficient
to consider a subset of the arcs for the on-demand shuttles of each
trip. More precisely, the optimization only needs to consider arcs i)
from origin to hubs, ii) from hubs to destination, and iii) from
origin to destination. This subset of necessary arcs for trip $r$ is
denoted by $A^r$. Consequently, the model only needs the following
decision variables for describing the on-demand shuttles used in trip
$r$:
\begin{align*}
y^r_{or^r h}, y^r_{h de^r} & \in \{0,1\} \quad \forall h \in H \\
y^r_{or^r de^r} & \in \{0,1\}.
\end{align*}
This preprocessing step significantly reduces the size of the follower
problem and provides a significant computational benefit.

\section{Solution Methodology}
\label{SectionSolutionAlg}

This section presents a decomposition approach to solve the bilevel
problem \eqref{eq:upperLevelProblemUpdated2}. The decomposition
combines traditional Benders optimality cuts with combinatorial
Benders cuts to capture the rider choices. The Benders master problem
is associated with the leader problem and considers the
complicating variables $(z_{hl}, \delta^r, d^r)$ and the subproblems
are associated with the follower problems.  The master problem relaxes
the user choice constraint \eqref{eq:userChoiceModel}.  The duals of
the subproblems generate Benders optimality cuts for the master
problem. Moreover, combinatorial Benders cuts are used to ensure that
the rider mode choices in the master problem are correctly captured by
the master problem. The overall decomposition approach iterates
between solving the master problem and guessing $(\bar{z}_{hl},
\bar{\delta}^r, \bar{d}^r)$ and solving the subproblems to obtain the
correct value $d^r$ from which the switching decision ${\cal
  C}^r(d^r)$ can be derived. The overall process terminates when 
the lower bound obtained in the master problem and 
upper bound computed through the feasible solutions converge.

Section \ref{masterProblemSection} presents the master problem and
Section \ref{sectionSubproblem} discusses the subproblem along with
some preprocessing steps. Section \ref{cutGenerationSection}
introduces the cut generation procedure and proposes stronger cuts
under some natural monotonicity
assumptions. Section~\ref{decompositionAlgorithmSection} specifies the
proposed decomposition algorithm and proves its finite
convergence. Finally, Section~\ref{paretoCutsSection} improves the
decomposition approach with Pareto-optimal cuts.

\subsection{Relaxed Master Problem}
\label{masterProblemSection}

The initial master problem \eqref{eq:relaxedMasterProblemInitial} is a
relaxation of the bilevel
problem~\eqref{eq:upperLevelProblemUpdated2}, i.e.,

\begin{subequations} \label{eq:relaxedMasterProblemInitial}
\begin{alignat}{1}
\min \quad & \sum_{h,l \in H} \beta_{hl} z_{hl} + \sum_{r \in T \setminus T'} p^r d^r + \sum_{r \in T'} p^r \delta^r d^r \label{eq:masterObjectiveFunction} \\
\text{s.t.} \quad & \eqref{eq:upperLevelConstr1}, \eqref{eq:binaryConstraint}, \eqref{eq:continuousConstraint}. \notag %\\ %, \eqref{eq:upperLevelConstr2} \\ %\eqref{eq:userChoiceModel}  \\ %& d^r = \theta^r (z) \quad \forall r \in T \\
%& z_{hl} \in \{0,1\} \quad \forall h,l \in H \\
%& \delta^r \in \{0,1\} \quad \forall r \in T
\end{alignat}
\end{subequations}

\noindent
Each iteration first solves the relaxed master problem
\eqref{eq:relaxedMasterProblemInitial}, before identifying
combinatorial and Benders cuts to add to the master problem. These cuts
depend on the proposed transit network design and rider choices as
discussed in Sections \ref{sectionSubproblem} and
\ref{cutGenerationSection}. The objective function
\eqref{eq:masterObjectiveFunction} involves nonlinear terms and needs
to be linearized. Since the mode choice is binary, the nonlinear
terms can be linearized easily by defining $\nu^r = \delta^r d^r$ and
adding the following constraints to the master problem for each trip
$r \in T'$:
\begin{subequations} 
\begin{alignat}{1}
\nu^r & \leq \bar{M}^r \delta^r \\
\nu^r & \leq d^r \\
\nu^r & \geq d^r - \bar{M}^r (1 - \delta^r) \\
\nu^r & \geq 0,
\end{alignat}
\end{subequations}
where the constant $\bar{M}^r$ is an upper bound value on the
objective function value of the lower level problem of trip $r$.

%To linearize the user choice constraint \eqref{eq:userChoiceModel}, we can replace it with 
%\begin{subequations} 
%\begin{alignat}{1}
%d^r - \alpha d^r_{current} & \leq M_1^r w_1^r \\
%uc^r & \leq 1 - w_1^r \label{eq:IndicatorLinearConstr} \\
%-d^r + \alpha d^r_{current} + \epsilon & \leq M_2^r w_2^r \\
%1 - uc^r & \leq 1 - w_2^r \\ 
%w_1^r, w_2^r & \in \{0,1\},
%\end{alignat}
%\end{subequations}
%for each route $r \in T$ and $\epsilon \approx 0$. Here, $M_1^r$ and $M_2^r$ represent upper bounds on $d^r - \alpha d^r_{current}$ and $-d^r + \alpha d^r_{current} + \epsilon$, respectively. By using the relationships $w_1^r \leq 1 - uc^r$ and $w_2^r \leq uc^r$, %from \eqref{eq:IndicatorLinearConstr}, 
%we can eliminate the binary variables $w_1^r$, $w_2^r$ and obtain the following set of constraints: 
%\begin{subequations} 
%\begin{alignat}{1}
%d^r - \alpha d^r_{current} & \leq M_1^r (1-uc^r) \\
%-d^r + \alpha d^r_{current} + \epsilon & \leq M_2^r uc^r.
%\end{alignat}
%\end{subequations}
%Here, we can satisfy the relationship $uc^r = \mathbbm{1}{(d^r \leq \alpha d^r_{current})}$ depending on the objective function value.  

\subsection{Subproblem for each Trip}
\label{sectionSubproblem}

The subproblems for the decomposition algorithm are the duals of the
follower problems \eqref{eq:lowerLevelProblem}. Since the follower
problems have a totally unimodular constraint matrix for a given
binary $\bar{z}$ vector, the integrality condition for variable
$x_{ij}^r$ can be relaxed into by $x^r_{ij} \geq 0$ and the bounds
$x^r_{ij} \leq 1$ can be discarded since it is redundant due to
constraint \eqref{eq:openFacilityOnlyAvailable}. Then, the dual of the
subproblem for each route $r \in T$ can then be specified by
introducing the dual variables $u^r_{i}$ and $v_{hl}^r$:

\begin{subequations} \label{eq:dualLowerLevel}
\begin{alignat}{1}
\max \quad & (u_{or^r}^r - u_{de^r}^r) - \sum_{h,l \in H} \bar{z}_{hl} v^r_{hl} \\
\text{s.t.} \quad & u_h^r - u_l^r - v_{hl}^r \leq \tau_{hl}^r \quad \forall h,l \in H \label{eq:dualLowerLevelConstr1} \\
& u_i^r - u_j^r \leq \gamma_{ij}^r \quad \forall i,j \in A^r \label{eq:dualLowerLevelConstr2} \\
& u_i^r \geq 0 \quad \forall i \in N, v_{hl}^r \geq 0 \quad \forall h,l \in H. \label{eq:dualLowerLevelConstr3}
\end{alignat}
\end{subequations}
Problem \eqref{eq:lowerLevelProblem} is trivially feasible by using the
direct trip between origin and destination (which may have a high
cost) and hence the dual problem \eqref{eq:dualLowerLevel} bounded.
The optimal objective value of subproblem \eqref{eq:lowerLevelProblem}
under solution $\{\bar{z}_{hl}\}_{h,l \in H}$ is denoted by
$SP^r(\bar{z})$. In the following section, this value is utilized to
evaluate the rider's mode choice and possibly to generate combinatorial
cuts.

\subsection{The Cut Generation Procedure}
\label{cutGenerationSection}

The cut generation procedure receives a feasible solution
$(\{\bar{z}_{hl}\}_{h,l \in H}$, $\{\bar{\delta}^r\}_{r \in T'}$,
$\{\bar{d}^r\}_{r \in T})$ to the relaxed master problem.  It solves the dual
subproblem \eqref{eq:dualLowerLevel} for each trip $r \in T$ under the
network design $\bar{z}$. For any trip $r \in T'$, the cut generation
procedure then analyzes the feasibility and optimality of the solution
of the relaxed master problem, depending on the value of
$SP^{r}(\bar{z})$. The cut generation first needs to enforce the
consistency of the choice model. 

\begin{definition}[Choice Consistency]
For a given trip $r$, the solution values $\{\bar{z}_{hl}\}_{h,l \in H}$ and $\bar{\delta}^{r}$
are \textit{consistent} with $SP^{r}(\bar{z})$ if
\[
\bar{\delta}^{r} = {\cal C}^r(SP^{r}(\bar{z})).
\]
\end{definition}

\noindent
As a result, it is useful to distinguish the following cases in the cut generation process:
\begin{enumerate}
\item Solution values $\{\bar{z}_{hl}\}_{h,l \in H}$ and $\bar{\delta}^{r}$ are \textit{inconsistent} with $SP^{r}(\bar{z})$
\begin{enumerate}
\item $\bar{\delta}^{r} = 1$ and ${\cal C}^r(SP^{r}(\bar{z})) = 0$;
\item $\bar{\delta}^{r} = 0$ and ${\cal C}^r(SP^{r}(\bar{z})) = 1$.
\end{enumerate}
\item Solution values $\{\bar{z}_{hl}\}_{h,l \in H}$ and $\bar{\delta}^{r}$ are \textit{consistent} with $SP^{r}(\bar{z})$.
\end{enumerate}

\noindent
The first inconsistency (case 1(a)) can be removed by using the cut
\begin{equation} \label{eq:case2Cut}
\sum_{(h,l):\bar{z}_{hl}=0} z_{hl} + \sum_{(h,l):\bar{z}_{hl}=1} (1-z_{hl}) \geq \delta^{r}
\end{equation}
\begin{proposition} \label{nogoodCut1Prop}
Constraint \eqref{eq:case2Cut} removes inconsistency 1(a).
\end{proposition}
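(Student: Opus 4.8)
The plan is to establish that the cut \eqref{eq:case2Cut} is both \emph{separating} (it is violated by the current inconsistent solution, hence removes it) and \emph{valid} (it does not eliminate any solution in which the design $z$ and the switching decision $\delta^r$ are consistent). Together these two properties justify adding the cut to the master problem and constitute what it means to ``remove'' inconsistency~1(a).

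First I would verify that the inconsistent solution is cut off. The left-hand side of \eqref{eq:case2Cut} is exactly the Hamming distance between a candidate design $z$ and the incumbent $\bar{z}$: it counts the number of hub legs on which $z$ differs from $\bar{z}$. Substituting $z = \bar{z}$ makes every term vanish, giving a left-hand side of $0$, while inconsistency~1(a) supplies $\bar{\delta}^{r} = 1$ on the right-hand side. The resulting requirement $0 \geq 1$ is infeasible, so the current master solution $(\bar{z}, \bar{\delta}^{r})$ is indeed excluded once the cut is imposed.

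Next I would argue validity by a case split on this Hamming distance. When the distance is $0$, i.e.\ $z = \bar{z}$, the cut collapses to $0 \geq \delta^{r}$ and thus forces $\delta^{r} = 0$; this is precisely the consistent value, since under design $\bar{z}$ the hypothesis of case~1(a) gives ${\cal C}^r(SP^{r}(\bar{z})) = 0$. When the distance is at least $1$, i.e.\ the design differs from $\bar{z}$ on some leg, the left-hand side is at least $1 \geq \delta^{r}$, so the inequality holds for both $\delta^{r} = 0$ and $\delta^{r} = 1$ and imposes no restriction. Hence the only solution removed is $(z,\delta^{r}) = (\bar{z}, 1)$, which is exactly the inconsistent one, and every design other than $\bar{z}$ retains full freedom in $\delta^{r}$.

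I do not anticipate a serious obstacle, as the argument is essentially logical rather than computational. The one point meriting care is the validity direction: one must confirm that for \emph{every} alternative design the cut leaves $\delta^{r}$ unconstrained, rather than inadvertently forbidding a legitimate switch under a different, possibly more convenient, network. This follows because any deviation from $\bar{z}$ raises the penalty term by at least one unit, immediately slackening the inequality. Correctness of the switching decision under those alternative designs is then enforced separately through the subproblem evaluation $SP^{r}(\cdot)$ and its own cuts, not by \eqref{eq:case2Cut}, so no valid configuration is lost.
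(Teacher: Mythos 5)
Your proof is correct and is exactly the intended justification: the paper states Proposition~\ref{nogoodCut1Prop} without proof, and your argument --- reading the left-hand side of \eqref{eq:case2Cut} as the Hamming distance to $\bar{z}$, checking that the inconsistent point $(\bar{z},\,\delta^r=1)$ violates the cut, and checking validity (for $z=\bar{z}$ the cut forces the consistent value $\delta^r=0$, and for $z\neq\bar{z}$ the cut is slack for both values of $\delta^r$) --- fills in precisely the reasoning the paper leaves implicit. Your closing remark is also the right caveat: the cut only enforces consistency at $\bar{z}$ itself, with consistency at other designs handled by cuts generated in later iterations.
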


\noindent
The second inconsistency (case 1(b)) can be removed by using the cut
\begin{equation} \label{eq:case3Cut}
\sum_{(h,l):\bar{z}_{hl}=0} z_{hl} + \sum_{(h,l):\bar{z}_{hl}=1} (1-z_{hl}) + \delta^{r} \geq 1
\end{equation}
\begin{proposition} \label{nogoodCut2Prop}
Constraint \eqref{eq:case3Cut} removes inconsistency 1(b).
\end{proposition}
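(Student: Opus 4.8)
The plan is to establish \eqref{eq:case3Cut} as a valid combinatorial (no-good) cut: I would show that it is violated by the current inconsistent solution, so that this configuration can no longer be returned by the master problem, while simultaneously verifying that it excludes no choice-consistent solution, so that adding it leaves the bilevel problem's solution set intact.

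First I would isolate the combinatorial term
\[
\Phi(z) := \sum_{(h,l):\bar{z}_{hl}=0} z_{hl} + \sum_{(h,l):\bar{z}_{hl}=1} (1-z_{hl}),
\]
and note that it is exactly the Hamming distance between the binary vectors $z$ and $\bar{z}$: every summand lies in $\{0,1\}$, so $\Phi$ is a nonnegative integer with $\Phi(\bar{z}) = 0$ and $\Phi(z) \geq 1$ whenever $z \neq \bar{z}$. Evaluating at the current solution, where $z = \bar{z}$ and $\delta^r = \bar{\delta}^r = 0$ by the hypothesis of case 1(b), yields $\Phi(\bar{z}) + \bar{\delta}^r = 0 < 1$, so this solution violates \eqref{eq:case3Cut} and is removed.

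Next I would argue validity by splitting on whether the design is reused. If a master solution keeps $z = \bar{z}$, then since $SP^r(\cdot)$ depends only on the network design we have $SP^r(z) = SP^r(\bar{z})$, and choice consistency forces $\delta^r = {\cal C}^r(SP^r(\bar{z})) = 1$ because ${\cal C}^r(SP^r(\bar{z})) = 1$ in case 1(b); hence $\Phi(\bar{z}) + \delta^r = 1$ and the cut holds with equality. If instead $z \neq \bar{z}$, then $\Phi(z) \geq 1$ and the cut holds regardless of $\delta^r$. In both cases \eqref{eq:case3Cut} is satisfied, so no consistent solution is excluded, which together with the previous paragraph proves the claim. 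The one step I would treat most carefully is this validity argument, since it rests entirely on $SP^r$ being a function of $z$ alone — the property that pins down the follower cost, and therefore the forced switching decision $\delta^r = 1$, as soon as the design is fixed to $\bar{z}$; the remaining reasoning is just integrality bookkeeping on $\Phi$.
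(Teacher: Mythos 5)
Your proof is correct. The paper actually states Proposition~\ref{nogoodCut2Prop} without any proof, treating it as immediate, so there is no published argument to compare against; your write-up supplies exactly the standard no-good-cut justification that the paper leaves implicit. Both halves of your argument are the right ones and are carried out correctly: the Hamming-distance term $\Phi$ vanishes at $(\bar{z}, \bar{\delta}^r=0)$, so the inconsistent master solution violates the cut and is removed; and for validity, any choice-consistent solution with $z=\bar{z}$ must have $\delta^r = {\cal C}^r(SP^r(\bar{z})) = 1$ (using that $SP^r$ is a function of the design alone), while any solution with $z \neq \bar{z}$ satisfies $\Phi(z)\geq 1$, so no consistent solution is cut off. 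Your emphasis on the validity half is well placed, since that is the part the paper's one-line proposition glosses over, and it is what guarantees the decomposition never excludes the bilevel optimum.
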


\noindent
Combinatorial cuts \eqref{eq:case2Cut} and \eqref{eq:case3Cut} ensure
the consistency between the rider choice model and the transit network
design $\bar{z}$. These cuts can be strenghtened under a monotonicity
property.

%Propositions \ref{nogoodCut1Prop} and
%\ref{nogoodCut2Prop} show that these cuts remove the transit
%network design $\bar{z}$ from the feasible solution space. We note
%that at the end of each iteration, we construct a feasible solution
%using the design $\bar{z}$. Consequently, it is safe to remove this
%solution from the feasible region, once it is evaluated.  For this
%purpose, we can also add a generic no-good-cut to remove the solution
%$\bar{z}$ as follows:
%\begin{equation} \label{eq:genericNoGoodCut}
%\sum_{(h,l):\bar{z}_{hl}=0} z_{hl} + \sum_{(h,l):\bar{z}_{hl}=1} (1-z_{hl}) \geq 1
%\end{equation}

\begin{definition}[Anti-Monotone Mode Choice] A choice function ${\cal C}$ is anti-monotone if
$
  d_1 \leq d_2 \Rightarrow {\cal C}(d_1) \geq {\cal C}(d_2).
$
\end{definition}

\begin{proposition} \label{monotoneTheorem}
Let $r \in T$. If $\bar{z}_1 \leq \bar{z}_2$, then $SP^{r}(\bar{z}_1) \geq SP^{r}(\bar{z}_2)$.
\end{proposition}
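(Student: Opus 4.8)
The plan is to exploit the fact that the network design vector $\bar{z}$ enters the follower problem \eqref{eq:lowerLevelProblem} in exactly one place, namely the coupling constraint \eqref{eq:openFacilityOnlyAvailable}, $x_{hl}^r \le \bar{z}_{hl}$, and nowhere in the objective or in the flow-conservation constraints. Since $SP^{r}(\bar{z})$ is the optimal value of a \emph{minimization} problem whose only $\bar{z}$-dependence is this upper bound on the bus-leg variables, the result is a monotonicity-in-the-data statement for a parametric linear program. I would prove it by a direct feasible-region-containment argument on the primal, and I would note the dual argument as a cross-check.

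For the primal argument, let $\bar{z}_1 \le \bar{z}_2$ componentwise, and let $(x^r, y^r)$ be any solution feasible for the follower problem under $\bar{z}_1$. Feasibility means that the flow-conservation constraints \eqref{eq:minFlowConstraint} hold, the integrality/nonnegativity conditions hold, and $x_{hl}^r \le (\bar{z}_1)_{hl}$ for all $h,l \in H$. The first two families of constraints do not involve $\bar{z}$ at all, while for the coupling constraint we have $x_{hl}^r \le (\bar{z}_1)_{hl} \le (\bar{z}_2)_{hl}$. Hence $(x^r, y^r)$ is also feasible under $\bar{z}_2$, so the feasible region under $\bar{z}_1$ is contained in the feasible region under $\bar{z}_2$. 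Minimizing the \emph{same} objective \eqref{eq:lowerLevelObj} (whose coefficients $\tau_{hl}^r$ and $\gamma_{ij}^r$ are independent of $\bar{z}$) over a larger set can only decrease the optimum, giving $SP^{r}(\bar{z}_2) \le SP^{r}(\bar{z}_1)$, which is the claim. Both values are well defined and finite because, as already observed after \eqref{eq:dualLowerLevel}, the direct origin-to-destination trip is always feasible regardless of $\bar{z}$.

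As an independent confirmation, the dual \eqref{eq:dualLowerLevel} makes the monotonicity transparent: its feasible region, described by \eqref{eq:dualLowerLevelConstr1}--\eqref{eq:dualLowerLevelConstr3}, does not depend on $\bar{z}$, and the objective is $(u_{or^r}^r - u_{de^r}^r) - \sum_{h,l \in H} \bar{z}_{hl} v_{hl}^r$ with $v_{hl}^r \ge 0$. For a fixed dual-feasible $(u^r, v^r)$, increasing any $\bar{z}_{hl}$ can only decrease this objective because its coefficient $-v_{hl}^r$ is nonpositive; taking the maximum over the common feasible region therefore yields a value that is nonincreasing in $\bar{z}$, and by strong duality this maximum equals $SP^{r}(\bar{z})$. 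There is no substantive obstacle here: the only thing to be careful about is the direction of the inequality (more open legs weakly improve, i.e. decrease, each rider's cost/convenience) and the explicit observation that the objective coefficients and the flow constraints carry no dependence on $\bar{z}$, which is what makes the containment and the sign argument go through.
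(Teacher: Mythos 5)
Your proof is correct and is essentially the paper's argument: the paper states informally that under $\bar{z}_1 \leq \bar{z}_2$ ``more arcs are available,'' so the shortest path can only get shorter, which is precisely your feasible-region-containment argument for the primal made rigorous. The dual cross-check via the nonpositive coefficients $-v_{hl}^r$ is a nice addition but not a different route, since both rest on the same observation that $\bar{z}$ enters only through the coupling constraint \eqref{eq:openFacilityOnlyAvailable}.
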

\begin{proof}
If $\bar{z}_1 \leq \bar{z}_2$, more arcs are available in the network
defined by $\bar{z}_2$ than in the network defined by
$\bar{z}_1$. Therefore, the length of the optimum shortest path for
trip $r$ under $\bar{z}_1$ is greater than or equal to that of
$\bar{z}_2$. \qed
\end{proof}

\noindent
The following proposition follows directly from Proposition \ref{monotoneTheorem}.

\begin{proposition} \label{monotoneProposition}
Let $r \in T$ and ${\cal C}^r$ be an anti-monotone choice function. If $\bar{z}_1 \leq \bar{z}_2$, then ${\cal C}^r(SP^{r}(\bar{z}_1)) \leq {\cal C}^r(SP^{r}(\bar{z}_2))$.
\end{proposition}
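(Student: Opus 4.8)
The plan is to prove the statement by directly composing the two order-reversing relationships already available: Proposition~\ref{monotoneTheorem}, which describes how $SP^r$ reverses the order on network designs, and the anti-monotonicity of ${\cal C}^r$, which reverses the order on scalar cost/convenience values. The key structural observation is that the composition of two antitone (order-reversing) maps is monotone (order-preserving), and the claim is exactly this composition evaluated at $\bar z_1$ and $\bar z_2$.

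Concretely, I would proceed in two short steps. First, I invoke Proposition~\ref{monotoneTheorem}: since $\bar z_1 \leq \bar z_2$, it yields $SP^r(\bar z_1) \geq SP^r(\bar z_2)$, or equivalently $SP^r(\bar z_2) \leq SP^r(\bar z_1)$. Second, I apply the definition of an anti-monotone mode choice to these two scalar quantities, instantiating $d_1 = SP^r(\bar z_2)$ and $d_2 = SP^r(\bar z_1)$. Because $d_1 \leq d_2$, anti-monotonicity gives ${\cal C}^r(d_1) \geq {\cal C}^r(d_2)$, i.e.\ ${\cal C}^r(SP^r(\bar z_2)) \geq {\cal C}^r(SP^r(\bar z_1))$. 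Rewriting this last inequality is precisely the desired conclusion ${\cal C}^r(SP^r(\bar z_1)) \leq {\cal C}^r(SP^r(\bar z_2))$.

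There is essentially no substantive obstacle here: the proposition is a formal consequence of chaining two already-established facts, which is why the excerpt notes it ``follows directly'' from Proposition~\ref{monotoneTheorem}. The only point demanding care is bookkeeping the direction of the inequalities, since each of the two maps flips the order and a sign error in either step would produce the reverse (and false) conclusion. I would therefore write the argument so that the two order reversals are visibly composed, making the net order preservation transparent. The entire proof is one or two lines and requires no case analysis on the specific form of ${\cal C}^r$, only the abstract anti-monotonicity hypothesis.
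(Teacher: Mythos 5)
Your proof is correct and matches the paper's argument: the paper gives no explicit proof, stating only that the proposition ``follows directly'' from Proposition~\ref{monotoneTheorem}, and your two-step composition of that proposition with the anti-monotonicity of ${\cal C}^r$ is precisely the intended reasoning, with the inequality directions handled correctly.
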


\noindent
When the choice function is anti-monotone, stronger cuts can be derived. 

%For cases 1(a) and 1(b), we can generate stronger cuts than
%\eqref{eq:case2Cut} and \eqref{eq:case3Cut}. To achieve this purpose,
%we prove the following property regarding the solution $\bar{z}$ and
%objective function value of the subproblem
%\eqref{eq:lowerLevelProblem} under a given solution.
%\begin{theorem} \label{monotoneTheorem}
%If $\bar{z}_1 \leq \bar{z}_2$, then $SP^{r'}(\bar{z}_1) \geq SP^{r'}(\bar{z}_2)$.
%\end{theorem}
%\begin{proof}
%If $\bar{z}_1 \leq \bar{z}_2$, we have more arcs available in the
%network defined by $\bar{z}_2$ compared to the network defined by
%$\bar{z}_1$. Therefore, the length of the optimum shortest path for
%trip $r'$ under the design $\bar{z}_1$ is greater than or equal to
%that of $\bar{z}_2$. Since optimum objective function value of the
%dual subproblems \eqref{eq:dualLowerLevel} correspond to these
%shortest path values, we obtain the desired result.
%\end{proof}
%By benefiting from the monotonic relationship demonstrated in Theorem \ref{monotoneTheorem}, we derive the following results. 
%>>>>>>> 20fee6f484ce20bc59a8f15b64b0743095cec453

\begin{proposition} \label{nogoodCut1PropStronger}
Consider an anti-monotone choice function. Then constraint \eqref{eq:case2Cut} for case 1(a) can be strengthened into constraint 
\begin{equation} \label{eq:case2CutStronger}
\sum_{(h,l):\bar{z}_{hl}=0} z_{hl} \geq \delta^{r}
\end{equation}
\end{proposition}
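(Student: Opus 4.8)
The plan is to prove two facts: that the strengthened inequality \eqref{eq:case2CutStronger} still separates the incumbent inconsistent point of case 1(a), and that it remains \emph{valid}, i.e., it excludes no solution whose switching decision is consistent. The first is immediate. Evaluating \eqref{eq:case2CutStronger} at $(\bar{z},\bar{\delta}^r=1)$ gives $\sum_{(h,l):\bar{z}_{hl}=0}\bar{z}_{hl}=0$ on the left and $\bar{\delta}^r=1$ on the right, so $0 \geq 1$ is violated and the point is cut off. Moreover, since the left-hand side of \eqref{eq:case2CutStronger} consists of a subset of the terms in \eqref{eq:case2Cut}, the new constraint implies the old one and is therefore at least as strong; what remains is to show it does not cut too much.

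For validity I would argue by contradiction. Suppose a master solution $(z,\delta^r)$ with $\delta^r=1$ violates \eqref{eq:case2CutStronger}, so $\sum_{(h,l):\bar{z}_{hl}=0} z_{hl}=0$; equivalently, $z$ opens no arc that was closed in $\bar{z}$. The key observation is that this is exactly the componentwise domination $z \leq \bar{z}$: the inequality $z_{hl}\leq\bar{z}_{hl}$ holds automatically wherever $\bar{z}_{hl}=1$ and is forced wherever $\bar{z}_{hl}=0$. With $z \leq \bar{z}$ established, Proposition \ref{monotoneTheorem} gives $SP^r(z)\geq SP^r(\bar{z})$, and anti-monotonicity (Proposition \ref{monotoneProposition}) then yields ${\cal C}^r(SP^r(z)) \leq {\cal C}^r(SP^r(\bar{z}))$. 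Since we are in case 1(a), the right-hand side is ${\cal C}^r(SP^r(\bar{z}))=0$, forcing ${\cal C}^r(SP^r(z))=0$. Hence the consistent choice for such a $z$ is $\delta^r=0$, so the assumed solution with $\delta^r=1$ is itself inconsistent and may be removed. This shows \eqref{eq:case2CutStronger} eliminates only inconsistent points, completing the strengthening.

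The step demanding the most care is the translation of ``opens no previously closed arc'' into the componentwise bound $z \leq \bar{z}$, since it is precisely this bound that licenses Proposition \ref{monotoneTheorem}; the subsequent monotonicity chain is routine. Conceptually, dropping the term $\sum_{(h,l):\bar{z}_{hl}=1}(1-z_{hl})$ is justified exactly because removing arcs from $\bar{z}$ can only lengthen rider $r$'s shortest path and hence can never turn a non-switching rider into a switching one; only opening a new arc can repair the inconsistency of case 1(a).
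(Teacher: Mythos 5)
Your proof is correct and follows essentially the same route as the paper's: both arguments rest on the observation that a solution violating the strengthened cut with $\delta^r=1$ satisfies $z \leq \bar{z}$ componentwise, so Propositions \ref{monotoneTheorem} and \ref{monotoneProposition} force ${\cal C}^r(SP^r(z)) \leq {\cal C}^r(SP^r(\bar{z})) = 0$, making such a point inconsistent and hence safe to remove. Your write-up is in fact more explicit than the paper's (which compresses the validity argument into ``the right term of cut \eqref{eq:case2Cut} does not remove the inconsistency''), separating cleanly the separation check from the validity check, but the key idea is identical.
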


\begin{proof}
  Consider case 1(a) and network design $\bar{z}$. Let $\tilde{z}$ be
  a network design obtained by removing some arcs from $\bar{z}$. By
  Proposition \ref{monotoneTheorem}, $SP^{r}(\tilde{z}) \geq
  SP^{r}(\bar{z})$ for any trip $r$. Hence, by Proposition
  \ref{monotoneProposition}, ${\cal C}^r(SP^{r}(\tilde{z})) \leq {\cal
    C}^r(SP^{r}(\bar{z}))$. Therefore, the right term of cut
  \eqref{eq:case2Cut} does not remove the inconsistency and the result
  follows. \qed
\end{proof}  

\begin{proposition}
Consider an anti-monotone choice function. Then constraint \eqref{eq:case3Cut} for case 1(b) can be strengthened into constraint
\begin{equation} \label{eq:case3CutStronger}
\sum_{(h,l):\bar{z}_{hl}=1} (1-z_{hl}) + \delta^{r} \geq 1
\end{equation}
\end{proposition}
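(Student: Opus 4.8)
The plan is to mirror the argument of Proposition~\ref{nogoodCut1PropStronger} with the roles of arc addition and arc removal exchanged, exploiting the anti-monotonicity of ${\cal C}^r$. Recall that case 1(b) corresponds to a relaxed-master solution with $\bar{\delta}^r = 0$ while ${\cal C}^r(SP^r(\bar{z})) = 1$: the master wrongly predicts that rider $r$ stays with her car even though, under the design $\bar{z}$, the follower route is cheap enough to trigger a mode switch. The original cut~\eqref{eq:case3Cut} forbids this by demanding that we either perturb $\bar{z}$ (through the two arc-change sums) or raise $\delta^r$ to $1$. I want to show that the arc-\emph{addition} term $\sum_{(h,l):\bar z_{hl}=0} z_{hl}$ is superfluous, so that it may be dropped to obtain~\eqref{eq:case3CutStronger}.

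The key step is to argue that enlarging the network cannot repair the inconsistency. Let $\tilde z \geq \bar z$ be any design obtained from $\bar z$ by opening additional bus legs. By Proposition~\ref{monotoneTheorem}, $SP^r(\tilde z) \le SP^r(\bar z)$, and then by Proposition~\ref{monotoneProposition}, ${\cal C}^r(SP^r(\tilde z)) \ge {\cal C}^r(SP^r(\bar z)) = 1$. Since ${\cal C}^r$ is $\{0,1\}$-valued, this forces ${\cal C}^r(SP^r(\tilde z)) = 1$. Hence, as long as $\delta^r$ is kept at $0$, every design that merely adds arcs to $\bar z$ remains inconsistent for trip $r$, exactly as $\bar z$ was.

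Consequently, a solution escapes case 1(b) only if it either removes at least one currently open leg---contributing to $\sum_{(h,l):\bar z_{hl}=1}(1-z_{hl})$---or sets $\delta^r = 1$; changing only $z_{hl}$ entries that were $0$ is never sufficient. This is precisely the statement that dropping $\sum_{(h,l):\bar z_{hl}=0} z_{hl}$ from~\eqref{eq:case3Cut} excludes no additional consistent point, so \eqref{eq:case3CutStronger} is a valid strengthening. I would then close by confirming that the strengthened cut still removes the current inconsistent point: at $z = \bar z$ the remaining sum vanishes and $\bar\delta^r = 0$, giving a left-hand side of $0 < 1$.

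The step I expect to require the most care is the orientation of the two monotonicity inequalities: one must check that it is arc \emph{addition} (not removal) that preserves a ${\cal C}^r = 1$ verdict here, which is the mirror image of the case-1(a) argument where arc removal preserved a ${\cal C}^r = 0$ verdict. Getting this direction right---and confirming that the binary range of ${\cal C}^r$ promotes the inequality ${\cal C}^r(SP^r(\tilde z)) \ge 1$ to an equality---is where a sign error would silently invalidate the strengthening.
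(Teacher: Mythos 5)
Your proposal is correct and follows essentially the same argument as the paper's proof: both use Propositions~\ref{monotoneTheorem} and~\ref{monotoneProposition} to show that adding arcs to $\bar{z}$ preserves the verdict ${\cal C}^r(SP^r(\tilde{z})) = 1$, so the arc-addition term in \eqref{eq:case3Cut} can never resolve the inconsistency and may be dropped. Your write-up is somewhat more explicit than the paper's (spelling out validity---no consistent point is cut off---and checking that the strengthened cut still excludes the current point), but the underlying reasoning is identical.
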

\begin{proof}
  Consider case 1(b) and network design $\bar{z}$. Let $\tilde{z}$ be
  a network design obtained by adding some arcs to $\bar{z}$. By
  Proposition \ref{monotoneTheorem}, $SP^{r}(\tilde{z}) \leq
  SP^{r}(\bar{z})$ for any trip $r$. Hence, by Proposition
  \ref{monotoneProposition}, ${\cal C}^r(SP^{r}(\tilde{z})) \geq {\cal
    C}^r(SP^{r}(\bar{z}))$. Thus, the left term of cut
  \eqref{eq:case3Cut} does not remove the inconsistency and the result
  follows. \qed
\end{proof}

%\textcolor{blue}{Also discuss the case where you add a lower bound on $d^r$ value for each trip!!!}
%\textcolor{red}{You can further remove single-cut parts from the paper!}
%We note that the consistency cuts introduced in \eqref{eq:case2Cut} and \eqref{eq:case3CutStronger} are in multi-cut form, implying addition of separate cuts for each trip. As an alternative to this approach, we provide a single-cut strategy in which we combine the cuts for the inconsistent trips. To this end, we define the set of trips with inconsistency described in cases 1(a) and 1(b) as $R_1$ and $R_2$, respectively. Then, we provide the single-cuts as follows: 
%\begin{align}
%|R_1| \left(\sum_{(h,l):\bar{z}_{hl}=0} z_{hl} + \sum_{(h,l):\bar{z}_{hl}=1} (1-z_{hl}) \right) & \geq \sum_{r' \in R_1} \delta^{r'} \label{eq:singleCutCase1}\\
%|R_2| \sum_{(h,l):\bar{z}_{hl}=1} (1-z_{hl}) + \sum_{r' \in R_2} \delta^{r'} & \geq |R_2| \label{eq:singleCutCase2}
%\end{align}
%Here, constraints \eqref{eq:singleCutCase1} and \eqref{eq:singleCutCase2} correspond to the single-cut version of constraints \eqref{eq:case2Cut} and \eqref{eq:case3CutStronger}, respectively. 
%Cases 1 and 4 are \textit{consistent}, for which the rider preference decision and shortest path duration are in accordance with each other. 

\noindent
Since the dual subproblem \eqref{eq:dualLowerLevel} is bounded, it is
also possible to add an optimality cut to the master problem in both
cases of 1 and 2 using the weighted cost and convenience of each
obtained route. This cut is the standard Benders optimality cut and it
uses the vertex $(\bar{u}^r, \bar{v}^r)$ obtained when solving the
dual subproblem as follows:
\begin{equation} \label{eq:optimalityCutLowerLevel} %(u_{or^r}^r - u_{de^r}^r)
d^{r} \geq (\bar{u}^{r}_{or^{r}} - \bar{u}^{r}_{de^{r}}) - \sum_{h,l \in H} z_{hl} \bar{v}_{hl}^{r}.
\end{equation}

\noindent
It is also possible to obtain an upper bound from the solutions to the
subproblems. Indeed, the rider choices can be derived from the
solutions of the subproblems and used instead of the corresponding
master variables for the mode choices.

The experimental results use the choice function ${\cal C}^r(d^r)
\equiv \mathbbm{1}(d^r \leq \alpha^r \ d^r_{car})$: A rider $r$ chooses the ODMTS
if her weighted cost and convenience is not greater than $\alpha^r$
times the weighted cost and convenience $d^r_{car}$ of using her
personal car. This choice function is anti-monotone.
\begin{proposition}
The choice function ${\cal C}^r(d^r) \equiv \mathbbm{1} (d^r \leq \alpha^r \ d^r_{car})$ is anti-monotone. 
\end{proposition}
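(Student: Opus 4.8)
The plan is to unfold the definition of anti-monotonicity and reduce the claim to a single transitivity argument on the threshold inequality. Fix $d_1 \leq d_2$; the goal is to establish ${\cal C}^r(d_1) \geq {\cal C}^r(d_2)$. Since ${\cal C}^r$ is an indicator function it takes values only in $\{0,1\}$, so the inequality ${\cal C}^r(d_1) \geq {\cal C}^r(d_2)$ can fail in exactly one configuration, namely ${\cal C}^r(d_1) = 0$ and ${\cal C}^r(d_2) = 1$. First I would isolate this as the only case requiring argument; all three remaining value-pairs satisfy the desired inequality trivially.

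Next I would rule out the bad case by contradiction. Assuming ${\cal C}^r(d_2) = 1$ means, by definition of the indicator, that $d_2 \leq \alpha^r \, d^r_{car}$. Chaining this with the hypothesis $d_1 \leq d_2$ yields $d_1 \leq d_2 \leq \alpha^r \, d^r_{car}$, hence $d_1 \leq \alpha^r \, d^r_{car}$, so that ${\cal C}^r(d_1) = 1$. This contradicts ${\cal C}^r(d_1) = 0$, closing the case and proving the implication. Equivalently, one can phrase the whole argument as the observation that $\mathbbm{1}(\,\cdot \leq c)$ is a non-increasing step function of its argument for any fixed threshold $c = \alpha^r \, d^r_{car}$.

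There is no substantive obstacle here: the result is essentially the statement that a below-threshold indicator is monotonically non-increasing in its input, and the proof is a one-line transitivity argument. The only point deserving care is the direction of the inequalities in the anti-monotone definition, $d_1 \leq d_2 \Rightarrow {\cal C}(d_1) \geq {\cal C}(d_2)$, so I would make sure to keep the roles of $d_1$ and $d_2$ consistent throughout and to treat $\alpha^r$ and $d^r_{car}$ as fixed nonnegative constants independent of the ODMTS configuration (so that the threshold does not move as the argument varies).
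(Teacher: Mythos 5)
Your proof is correct and follows essentially the same route as the paper's: the paper's one-line argument simply asserts that $d_1^r \leq d_2^r$ implies ${\cal C}^r(d_1^r) \geq {\cal C}^r(d_2^r)$, which is exactly the transitivity-of-the-threshold observation you spell out. Your version merely makes explicit the case analysis and the fact that $\mathbbm{1}(\cdot \leq c)$ is non-increasing, which the paper leaves implicit.
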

\begin{proof}
  By definition, $d^r$ decreases when adding arcs to a network and $d_1^r \leq d_2^r$ implies
  ${\cal C}^r(d^r_1) \geq {\cal C}^r(d^r_2)$. \qed
\end{proof}

\subsection{Decomposition Algorithm} 
\label{decompositionAlgorithmSection}

The decomposition is summarized in Algorithm
\ref{alg:bilevelNetworkDecomposition}. It uses a lower and an upper
bound to the bilevel problem \eqref{eq:upperLevelProblemUpdated2} to
derive a stopping condition. The master problem provides a lower bound
and, as mentioned earlier, an upper bound can be derived for each
network design by solving the subproblems and obtaining the mode
choices for the trips.

\begin{algorithm}[!t]
\caption{Decomposition Algorithm}
\label{alg:bilevelNetworkDecomposition}
\begin{algorithmic}[1]
\STATE Set $LB = -\infty$,  $UB = \infty$, $z^*=\emptyset$. 
\WHILE{$UB > LB + \epsilon$}
\STATE Solve the relaxed master problem \eqref{eq:relaxedMasterProblemInitial} and obtain the solution ($\{\bar{z}_{hl}\}_{h,l \in H}$, $\{\bar{\delta}^r\}_{r \in T'}$, $\{\bar{d}^r\}_{r \in T})$.  
\STATE Update $LB$.
\FORALL{$r \in T$}
\STATE Solve the subproblem \eqref{eq:dualLowerLevel} under $\bar{z}$, and obtain $SP^{r}(\bar{z})$. 
%\IF{$SP^{r'}(\bar{z})$ is unbounded}
%\STATE Add feasibility cut in the form \eqref{eq:feasibilityCutLowerLevel} to the relaxed master problem. 
%\ELSE
\STATE Add optimality cut in the form \eqref{eq:optimalityCutLowerLevel} to the relaxed master problem \eqref{eq:relaxedMasterProblemInitial}. 
\ENDFOR
\FORALL{$r \in T'$}
\IF{$\{\bar{z}_{hl}\}_{h,l \in H}$ and $\bar{\delta}^{r}$ are  inconsistent with $SP^{r}(\bar{z})$}
\STATE Add cuts in the form \eqref{eq:case2CutStronger} or \eqref{eq:case3CutStronger} to the relaxed master problem. \label{algorithmNogoodCutStep}
\ENDIF
\IF{${\cal C}^r(SP^{r}(\bar{z}))$ is 1} %{$SP^{r}(\bar{z}) \leq \alpha^{r} d^{r}_{current}$}
\STATE Set $\hat{\delta}^r = 1$.
\ELSE
\STATE Set $\hat{\delta}^r = 0$.
\ENDIF
%\ENDIF
\ENDFOR
\STATE $\widehat{UB} = \sum_{h,l \in H} \beta_{hl} \bar{z}_{hl} + \sum_{r \in T \setminus T'} p^r SP^{r}(\bar{z}) + \sum_{r \in T'} p^r \hat{\delta}^r SP^{r}(\bar{z})$.
\IF{$\widehat{UB} < UB$} %{$\{\bar{z}_{hl}\}_{h,l \in H}$, $\{\bar{uc}^r\}_{r \in T}$, $\{\bar{d}^r\}_{r \in T})$ is feasible and consistent for all trips $r \in T$}
\STATE Update $UB$ as $\widehat{UB}$, $z^* = \bar{z}$. 
\ENDIF
%\STATE Add cut in the form \eqref{eq:genericNoGoodCut} to the relaxed master problem for removing $\bar{z}$ from the feasible region.
\ENDWHILE
\end{algorithmic}
\end{algorithm}
%We note that if we adopt a single-cut case for the inconsistent solutions, then we revise Step \ref{algorithmNogoodCutStep} of Algorithm \ref{alg:bilevelNetworkDecomposition} by considering the addition of constraints \eqref{eq:singleCutCase1} or \eqref{eq:singleCutCase2} depending on the type of inconsistency. %for the single-cut case.

\begin{proposition}
Algorithm \ref{alg:bilevelNetworkDecomposition} converges in finitely many iterations. 
\end{proposition}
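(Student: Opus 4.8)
The plan is to exploit the fact that the network design variables are binary, so there are only finitely many feasible designs, and to show that each design can be produced as a master solution only a bounded number of times before the lower and upper bounds coincide. Throughout, let ${\cal Z}$ denote the finite set of designs $z \in \{0,1\}^{|H|\times|H|}$ satisfying the weak-connectivity constraint \eqref{eq:upperLevelConstr1}, and for $\bar z \in {\cal Z}$ write $V(\bar z) = \sum_{h,l} \beta_{hl}\bar z_{hl} + \sum_{r \in T\setminus T'} p^r SP^r(\bar z) + \sum_{r \in T'} p^r {\cal C}^r(SP^r(\bar z)) SP^r(\bar z)$ for the true bilevel objective obtained by completing $\bar z$ with the exact follower costs and the consistent mode choices.

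First I would record two invariants. (i) $LB$ is a valid lower bound: the master \eqref{eq:relaxedMasterProblemInitial} is a relaxation of \eqref{eq:upperLevelProblemUpdated2}, and every cut added is valid for the bilevel problem, since the Benders cut \eqref{eq:optimalityCutLowerLevel} holds by LP duality of the follower problem and the combinatorial cuts are satisfied by every consistent solution (Propositions \ref{nogoodCut1Prop} and \ref{nogoodCut2Prop} and their strengthenings). (ii) $UB$ is a valid upper bound, because the quantity $\widehat{UB}$ computed at each iteration is exactly $V(\bar z)$ for the current $\bar z$, i.e., the objective of a genuine feasible solution of \eqref{eq:upperLevelProblemUpdated2}. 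In particular $LB \le UB$ at every iteration.

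Next comes the core step. Each iteration returns a pair $(\bar z, \bar\delta) \in {\cal Z}\times\{0,1\}^{T'}$, and there are finitely many such pairs. If $(\bar z,\bar\delta)$ is inconsistent for some rider, the combinatorial cut added in line \ref{algorithmNogoodCutStep} is violated by $(\bar z,\bar\delta)$ itself: evaluating \eqref{eq:case2CutStronger} or \eqref{eq:case3CutStronger} at $z=\bar z$ yields $0\ge\bar\delta^r=1$ or $0\ge1$, so this exact pair can never be returned again. If instead $(\bar z,\bar\delta)$ is consistent, then $\widehat{UB}=V(\bar z)$ and $UB$ is updated to at most $V(\bar z)$. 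The key observation is that when the same design $\bar z$ reappears at a later iteration, the Benders cut \eqref{eq:optimalityCutLowerLevel} generated for $\bar z$ is already present, and evaluated at $z=\bar z$ its right-hand side equals $SP^r(\bar z)$, forcing $\bar d^r \ge SP^r(\bar z)$ for every $r$. Since the follower costs enter the master objective with nonnegative weight (directly for $r\in T\setminus T'$ and through the linearization of $\delta^r d^r$ for $r\in T'$), the returned consistent solution then has master objective at least $V(\bar z)$; hence $LB \ge V(\bar z) \ge UB$, which together with $LB \le UB$ gives $LB=UB$ and triggers the stopping condition. Assembling the count, each inconsistent pair is eliminated after a single appearance and each consistent pair forces termination upon its second appearance, so the number of iterations is bounded by roughly twice $|{\cal Z}| \cdot 2^{|T'|}$, which is finite.

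I expect the main obstacle to be the \emph{tightness} part of the core step, namely arguing that re-encountering a design reproduces its exact cost $V(\bar z)$ as a lower bound. This requires (a) strong LP duality for the totally unimodular follower so that the stored dual vertex certifies $SP^r(\bar z)$ at $z=\bar z$, and (b) verifying that the minimizing master cannot keep some $\bar d^r$ below $SP^r(\bar z)$, nor flip a mode choice to an inconsistent value, without triggering a fresh combinatorial cut; for a fixed $\bar z$ the consistent choice vector is unique, so any reappearance of $\bar z$ is either the consistent pair (forcing termination) or a new inconsistent pair (immediately cut off). The remaining no-good bookkeeping is comparatively routine, relying only on the finiteness of $\{0,1\}$-designs and the fact that each combinatorial cut excludes the pair that generated it.
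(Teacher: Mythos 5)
Your proof is correct, but it reaches finite convergence by a genuinely different route than the paper. The paper counts \emph{cuts}: it notes that there are only finitely many consistency cuts of the form \eqref{eq:case2CutStronger} or \eqref{eq:case3CutStronger} because $z$ and $\delta^r$ are binary, asserts that once all of them have been generated the procedure reduces to a standard Benders decomposition (whose finite convergence rests on the finiteness of dual extreme points of \eqref{eq:dualLowerLevel}), and concludes by claiming each iteration adds at least one new consistency or Benders cut. You instead count \emph{master solutions}: the pairs $(\bar z,\bar\delta)$ live in a finite set, an inconsistent pair is permanently excluded by the combinatorial cut it triggers, and a consistent pair cannot recur without forcing $LB \geq V(\bar z) \geq UB$, the crucial tightness step being that the stored Benders cut \eqref{eq:optimalityCutLowerLevel}, evaluated at $z=\bar z$, certifies $d^r \geq SP^r(\bar z)$ by strong LP duality (available because the follower problem is feasible, bounded, and totally unimodular). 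Your route is more self-contained and in fact patches the step the paper leaves implicit: the assertion that ``each iteration adds at least one new cut'' is not literally justified there, since an iteration may generate only cuts already present---exactly the situation your revisit argument shows must trigger termination through $LB = UB$. What the paper's argument buys in exchange is brevity, delegating the tail of the analysis to the known finite convergence of classical Benders decomposition; your argument needs neither that citation nor any enumeration of dual vertices, at the price of carefully tracking the linearization $\nu^r = \delta^r d^r$ and the uniqueness of the consistent choice vector for a fixed design, both of which you handle correctly.
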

\begin{proof}
  The algorithm generates traditional Benders optimality cuts and, in
  addition, the consistency cuts of the form
  \eqref{eq:case2CutStronger} or \eqref{eq:case3CutStronger}. When all
  the consistency cuts are generated, the algorithm reduces to a
  standard Benders decomposition. There are only finitely many
  consistency cuts, because the decision variables $z$ and $\delta^r$
  are binary. Since each iteration adds at least one new consistency
  or Benders cut, the algorithm is guaranteed to converge in finitely
  many iterations. \qed
\end{proof}

\subsection{Pareto-Optimal Cuts}
\label{paretoCutsSection}

The decomposition algorithm can be further enhanced by utilizing
Pareto-optimal cuts \cite{Magnanti1981} while generating the
optimality cuts \eqref{eq:optimalityCutLowerLevel}. This approach aims
at accelerating the decomposition algorithm by generating
stronger cuts through alternative optimal solutions of the
subproblems. To this end, the algorithm first solves the follower problem
\eqref{eq:lowerLevelProblem} under a given network design, obtains the
optimal objective function value for the corresponding trip, and then
solve the Pareto subproblem, which is a restricted version of the dual
subproblem \eqref{eq:dualLowerLevel} under this optimal value.

Observe that, once the transit network design $\bar{z}$ is given, the
follower problem of each trip $r$ is equivalent to solving a shortest
path problem considering the union of the arcs defined by $\bar{z}$
and the arcs in the set $A^r$. Consequently, this shortest path
information can be obtained by solving a linear program and obtaining
the objective value $\sigma^r$ for trip $r$. Using this information,
the Pareto subproblem for trip $r$ is defined as follows:
\begin{subequations} \label{eq:dualLowerLevelPareto}
\begin{alignat}{1}
\max \quad & (u_{or^r}^r - u_{de^r}^r) - \sum_{h,l \in H} {z}^0_{hl} v^r_{hl} \\
\text{s.t.} \quad & u_h^r - u_l^r - v_{hl}^r \leq \tau_{hl}^r \quad \forall h,l \in H \label{eq:dualLowerLevelConstr1Pareto} \\
& u_i^r - u_j^r \leq \gamma_{ij}^r \quad \forall i,j \in A^r \label{eq:dualLowerLevelConstr2Pareto} \\
& (u_{or^r}^r - u_{de^r}^r) - \sum_{h,l \in H} \bar{z}_{hl} v^r_{hl} = \sigma^r \label{eq:dualLowerLevelConstr4Pareto} \\
& u_i^r \geq 0 \quad \forall i \in N, v_{hl}^r \geq 0 \quad \forall h,l \in H, \label{eq:dualLowerLevelConstr3Pareto}
\end{alignat}
\end{subequations}
where $z^0$ is a core point that satisfies the weak connectivity
constraint \eqref{eq:upperLevelConstr1}. To obtain an initial core
point, it suffices to select a value $\eta \in (0,1)$, and set $z_{hl}
= \eta$ for all $h,l \in H$.

\section{Computational Results}
\label{SectionComputations}

The computational study considers a data set from Ann Arbor, Michigan
with 10 hubs located around high density corridors and 1267 bus stops.
The experiments examine a set of trips from 6 pm to 10 pm on a
specific day. The studied data set involves 1503 trips with a total of
2896 users, where the origin and destination of each trip are
associated with bus stops. The costs and times between the bus stops
are asymmetric in the studied data set. The study included a
preprocessing step to ensure the triangular inequality with respect to
the cost and convenience parameters of the on-demand shuttles between
the stops.

To model rider preferences in the formulation, the computational study
used an income-based classification. This approach assumes that, as
the income level of a rider increases, she becomes more sensitive to
the quality of the ODMTS route (convenience). In particular, the study
considers three classes of riders: i) low-income, ii) middle-income
and iii) high-income, where a certain percentage of riders from each
class is assumed to use the ODMTS. The trips are then classified with
respect to their destination locations, which can be associated with
the residences of the corresponding riders. In particular, in the base
scenario, 100\% of low-income riders, 75\% of middle-income riders,
and 50\% of high-income users utilize the transit system, whereas the
remaining riders have the option to select the ODMTS or use their
personal vehicles by comparing the obtained route with their current
mode of travel.

The convenience parameter $\theta$ is set to $0.01$ for weighting cost
and convenience. The cost of an on-demand shuttle per mile is taken as
$g = \$2.86$ and the cost of a bus per mile is $b = \$7.24$. The buses
between hubs have a frequency of 15 minutes, resulting in 16 buses
during the planning horizon with length of 4 hours. As mentioned
earlier, the price of a ride in the ODMTS is half the cost of the
shuttle legs. The base case of the case study sets $\alpha^r$ to 1.25
and 1 for middle-income and high-income riders respectively. The
distance threshold for the on-demand shuttles, $\Delta$, is set to 2
or 5 miles.

\subsection{Transit Design and Mode Switching}
\label{caseStudySection}

Figure \ref{networkDesignFigure} depicts the transit network design
between hubs under the proposed approach. The bus stops associated
with the lowest income level are red dots, those of the middle-income
level are grey boxes, and those of the high-income level are green
plus symbols. In the resulting network design, almost every hub is
connected to at least another hub ensuring weak connectivity within
the transit system.

\begin{figure}[!t]
\includegraphics[width=\textwidth]{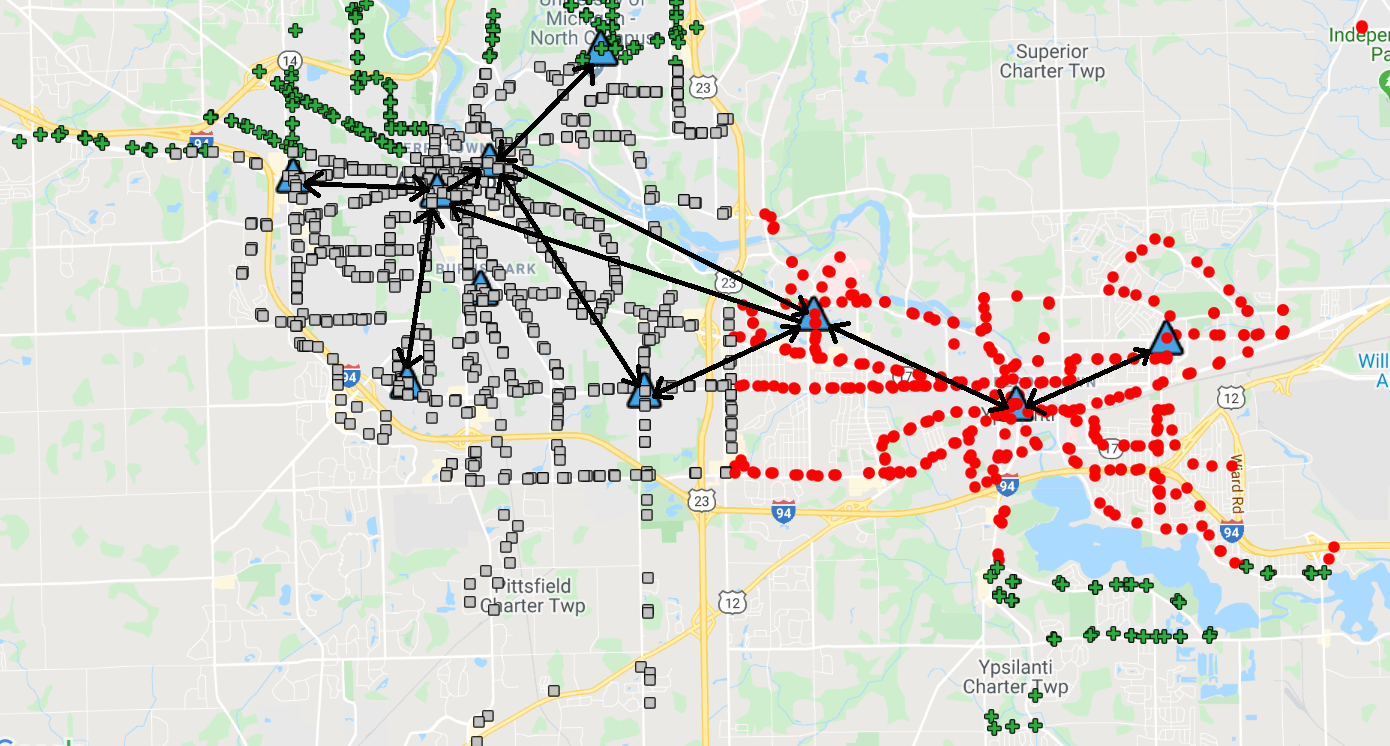}
\caption{Network Design for the ODMTS with 10 Hubs with $\Delta = 2$.} \label{networkDesignFigure}
\end{figure}

\begin{table}[!t]
\caption{Adoption Rates, Average Route Time and Average Ride Cost for the ODMTS.}
\centering
\begin{tabular}{crrrrrr}
\hline
income level & \#trips & \%adoption & \#riders & \%adoption & avg route time (s) & avg route cost (\$) \\
\hline
       low &        476 &       1.00 &        877 &       1.00 &     901.45 &       2.41 \\

    middle &        784 &       0.96 &       1615 &       0.97 &     553.43 &       2.43 \\

      high &       149 &       0.72 &        285 &       0.79 &     583.10 &       2.78 \\

\hline
\end{tabular}  
\label{caseStudyTable}
\end{table}

Table \ref{caseStudyTable} shows the rider preferences, and the
average time and cost of the obtained routes. In particular, columns
`\#trips' and `\#riders' represent the number of trips and riders of
the ODMTS. The ``\%adoption'' columns correspond to the adoption rate,
i.e., the percentage of trips or riders utilizing the ODMTS. When
computing the adoption rate, these numbers include the initial set of
riders, i.e., 100\% of low-income riders, 75\% of middle-income riders
and 50\% of high-income riders.  The cost and convenience of the ODMTS
is sufficiently attractive to exhibit significant mode switching, even
for the high-income population.  Columns for the average route time
and cost represent the averages for the obtained routes regardless of
the fact that whether riders adopts the transit system or not. The
results highlight the high adoption rates. The average route time is
the longest for the low-income riders given their long commuting
trips.  Similar results are observed for the number of transfers,
which include the transfers between on-demand shuttles and buses, and
between the buses in the hubs. Specifically, from the set of riders
choosing the transit system, 22\% of low-income riders, 8\% of
medium-income riders, and 3\% of high-income riders have at least 3
transfers. Moreover, the number of transfers decreases with increases
in income level. 

\begin{table}[!t]
\caption{Comparing the Average Cost and Time of the ODMTS trips and those Using Personal Vehicles (Cars).}
\begin{tabular}{crrrr|rrrr}
\hline
           & \multicolumn{ 4}{c}{ODMTS Trips} & \multicolumn{ 4}{c}{Car Trips} \\
\cline{2-9}
 & \multicolumn{ 2}{c}{Time} & \multicolumn{ 2}{c}{Cost} & \multicolumn{ 2}{c}{Time} & \multicolumn{ 2}{c}{Cost} \\
 \hline
income level           &      ODMTS &        Cars &      ODMTS &        Cars &      ODMTS &        Cars &      ODMTS &        Cars \\
\hline
low & 901.45 &     405.96 &       2.41 &      10.72 &         NA &            &      NA      &            \\

   medium & 528.94 &     296.95 &       2.38 &       7.14 &    1489.80 &     585.03 &       5.17 &      14.55 \\

  high &  529.84 &     326.53 &       2.30 &       7.06 &      93.77 &      31.51 &       0.21 &       0.88 \\
\hline
\end{tabular}  
\label{caseStudyTableTripAnalysis}
\end{table}

Table \ref{caseStudyTableTripAnalysis} presents a cost and convenience
analysis for the ODMTS trips and those using personal vehicles. It
also provides the cost and convenience of the other mode, i.e., the
convenience and cost of using a personal vehicle for those using the
ODMTS and vice-versa. As can be seen, the cost of using the ODMTS is
significantly lower, although personal vehicles would decrease the commute
time significantly for low-income riders. Note however that the ODMTS
has also achieved low commuting times. Riders using personal vehicles do so
because the transit times are simply too large for their trips.

\begin{figure}[!t]
\includegraphics[width=\textwidth]{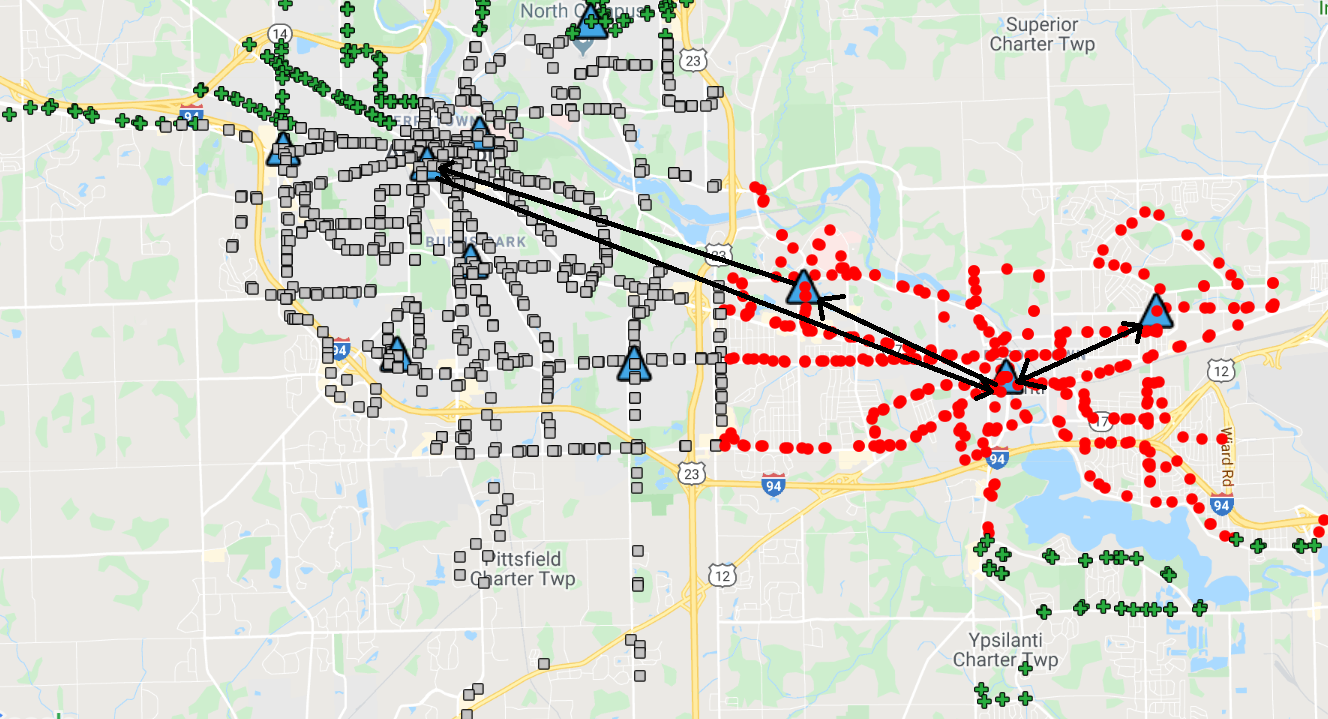}
\caption{Network design for the ODMTS with 10 Hubs with $\Delta = 5$.} \label{networkDesignFigureDelta5miles}
\end{figure} 

The next results examine the effect of the threshold value $\Delta$ on
the rides with on-demand shuttles. Figure
\ref{networkDesignFigureDelta5miles} presents the network design with
$\Delta = 5$ mile. This allows for longer shuttle rides from origin to
destination of each trip compared to the $\Delta = 2$ case in Figure
\ref{networkDesignFigure}. As a result, the network design has fewer
connections between hubs. Although the investment cost for the network
design is lower in this case, the average route cost of the obtained
trips increases and the average time of the trips decreases through
the adoption of more on-demand shuttles. This highlights the trade-off
between the high-frequency buses and on-demand shuttles.

\subsection{The Benefits of the Formulation}
\label{solutionEvaluationSection}

This section compares the novel bilevel formulation with latent demand
(L) with the original formulation that ignores the latent demand
(O). In other words, the original formulation designs the network with
$T \setminus T'$ trips but is evaluated on the complete set $T$ of
trips. The two network designs are then compared in terms of cost and
convenience. To obtain a realistic setting, the share of public
transit is assumed to be 10\% for each income level. The results are
presented in Table \ref{tab:modelComparisonResults}.

%provides the comparison of the two approaches under $\Delta = 2$ and $\Delta = 5$ miles settings, where we refer to our model as M1 and the alternative model as M2. We report the adoption rate with respect to the income levels of the riders. We note that all of the riders from low-income level are considered to utilize the ODMTS in both models. Next, we present the costs of investment of the network design and the ODMTS trips, total convenience (time) of all of the trips, and the weighted cost and convenience of the resulting routes. 

\begin{table}
    \centering
    \caption{Comparison of the Proposed Model (L) and the Original Model (O) with $\Delta=5$.}
    \label{tab:modelComparisonResults}
      \begin{tabular}{ccccccc} %ccrrrrr
  \hline
  Model & Income &   Adoption & Investment (\$) & ODMTS trips (\$) & Conv. (s) & Cost \& Conv. \\
  \hline
\multicolumn{ 1}{c}{L} &        low &       1.00 & \multicolumn{ 1}{c}{2482.38} & \multicolumn{ 1}{c}{17530.24} & \multicolumn{ 1}{c}{1269263.87} & \multicolumn{ 1}{c}{32505.13} \\

\multicolumn{ 1}{c}{} &     middle &       1.00 & \multicolumn{ 1}{c}{} & \multicolumn{ 1}{c}{} & \multicolumn{ 1}{c}{} & \multicolumn{ 1}{c}{} \\

\multicolumn{ 1}{c}{} &       high &       0.84 & \multicolumn{ 1}{c}{} & \multicolumn{ 1}{c}{} & \multicolumn{ 1}{c}{} & \multicolumn{ 1}{c}{} \\

\multicolumn{ 1}{c}{O} &        low &       1.00 & \multicolumn{ 1}{c}{861.54} & \multicolumn{ 1}{c}{20685.56} & \multicolumn{ 1}{c}{1167457.12} & \multicolumn{ 1}{c}{33006.20} \\

\multicolumn{ 1}{c}{} &     middle &       0.99 & \multicolumn{ 1}{c}{} & \multicolumn{ 1}{c}{} & \multicolumn{ 1}{c}{} & \multicolumn{ 1}{c}{} \\

\multicolumn{ 1}{c}{} &       high &       0.82 & \multicolumn{ 1}{c}{} & \multicolumn{ 1}{c}{} & \multicolumn{ 1}{c}{} & \multicolumn{ 1}{c}{} \\
\hline
\end{tabular}  
  %\begin{tabular}{ccccccc} %ccrrrrr
  %\hline
  %   Model & Income &   Adoption & Investment (\$) & ODMTS trips (\$) & Conv. (s) & Cost \& Conv. \\
%\hline
%\multicolumn{ 1}{c}{L-2} &     middle &       0.96 & \multicolumn{ 1}{c}{5836.17} & \multicolumn{ 1}{c}{13236.55} & \multicolumn{ 1}{c}{1795817.88} & \multicolumn{ 1}{c}{36840.18} \\

%\multicolumn{ 1}{c}{} &       high &       0.72 & \multicolumn{ 1}{c}{} & \multicolumn{ 1}{c}{} & \multicolumn{ 1}{c}{} & \multicolumn{ 1}{c}{} \\

%\multicolumn{ 1}{c}{O-2} &     middle &       0.95 & \multicolumn{ 1}{c}{5385.88} & \multicolumn{ 1}{c}{13272.70} & \multicolumn{ 1}{c}{1848378.82} & \multicolumn{ 1}{c}{36955.78} \\

%\multicolumn{ 1}{c}{} &       high &       0.73 & \multicolumn{ 1}{c}{} & \multicolumn{ 1}{c}{} & \multicolumn{ 1}{c}{} & \multicolumn{ 1}{c}{} \\

%\multicolumn{ 1}{c}{L-5, O-5} &     middle &       1.00 & \multicolumn{ 1}{c}{2842.37} & \multicolumn{ 1}{c}{17681.11} & \multicolumn{ 1}{c}{1275847.82} & \multicolumn{ 1}{c}{33076.71} \\

%\multicolumn{ 1}{c}{} &       high &       0.89 & \multicolumn{ 1}{c}{} & \multicolumn{ 1}{c}{} & \multicolumn{ 1}{c}{} & \multicolumn{ 1}{c}{} \\
%\hline
%\end{tabular}  
\end{table}

The results show that both models have similar results in terms of
mode switching. However, the new formulation has a higher investment
cost and a lower cost for the ODMTS trips compared to the original
formulation. The difference between the models is highlighted in
Figure \ref{networkDesignFigureComparison}, which shows the network
designs under the two approaches: The dashed legs represent the design
under the original model, and the other legs correspond to the design
of the proposed model. This result is intuitive: With more ridership,
the ODMTS should open more legs and further reduces congestion. It
shows that the novel formulation provides a more robust solution that
should reassure transit agencies. As the original formulation opens
fewer legs between hubs, users utilize more on-demand shuttles,
resulting in trips with more convenience but at much higher costs. In
terms of the total investment and trips cost, the results show that
the new and original formulations have total costs of \$20012.62 and
\$21547.10, respectively. As this cost improvement corresponds to a
planning horizon of 4 hours, it scales up to a gain of \$1227585 over
a yearly plan with 200 days over 16 hours. {\em This is significant
  for this case study and highlights why transit agencies are worried
  about the success of ODMTS when they are planned with the existing
  demand only: They will under-invest in bus lines and sustain higher
  shuttle costs.} The formulation proposed in this paper remedies this
limitation: By taking into account the personalized choice models of riders,
the network design invests in high-frequency buses, decreasing the
overall cost while maintaining an attractive level of convenience.
Note also that, the pricing model adopted in this paper keeps the
transit costs low but is also conducive to numerous mode switchings,
since the transit system subsidies half the cost.

\begin{figure}
\includegraphics[width=\textwidth]{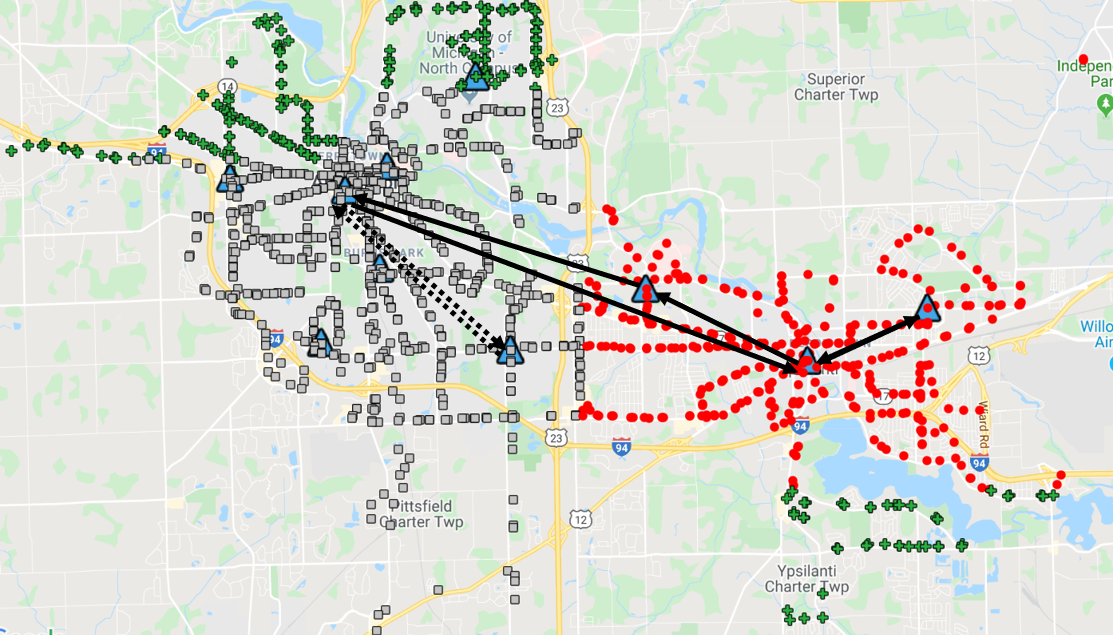}
\caption{Network Designs of the Proposed Model (L) and the Original Model (O).} \label{networkDesignFigureComparison}
\vspace{-4mm}
\end{figure}

It is also important to report the computational performance of the
proposed algorithms. The formulation with latent demand requires 513
seconds to converge in 8 iterations, whereas the original formulation
requires 189 seconds in 8 iterations for the presented case
study. 

\section{Conclusion}
\label{SectionConclusion}
%\vspace{-2mm}

This study presented a bilevel optimization approach for modeling the
ODMTS by integrating rider preferences and considering latent
demand. The transit network designer optimizes the network design
between the hubs for connecting them with high frequency buses,
whereas each rider tries to find the most cost-efficient and
convenient route under a given design through buses and on-demand
shuttles. The paper considered a generic preference model to capture
whether riders switch to the ODMTS based on the obtained route and
their current mode of travel. To solve the resulting optimization
problem, the paper proposed a novel decomposition approach and
developed combinatorial Benders cuts for coupling the network design
decisions with rider preferences. A cut strengthening was also
proposed to exploit the structure of the follower problem and, in
particular, a monotonicity assumption of the choice model.  The
potential of the approach was demonstrated on a case study using a
data set from Ann Arbor, Michigan. The results showed that ignoring
latent demand can lead to significant cost increase (about 7.5\%) for
transit agencies, confirming that these agencies are correct in
worrying about customer adoption. This is the case even for a pricing
model where the transit agency and riders share the shuttle costs.
The new formulation can also be solved in reasonable time.

Current work is devoted to examining the impact of various cost models
and different choice models for riders. Applications of the model to
the city of Atlanta is also contemplated and should reveal some
interesting modeling and computational challenges given the size of
the city.

\vspace{-2mm}

\subsection*{Acknowledgements}

This research is partly supported by NSF Leap HI proposal NSF-1854684.

\bibliographystyle{splncs04}
\bibliography{references}

\end{document}